\newtheorem{theorem}{Theorem}[section]
\newtheorem{lemma}[theorem]{Lemma}
\newtheorem{prop}[theorem]{Proposition}
\newtheorem{cor}[theorem]{Corollary}
\theoremstyle{definition}
\newtheorem{definition}[theorem]{Definition}
\newtheorem{example}[theorem]{Example}
\numberwithin{theorem}{section}
\newcommand{\LQ}{\mathbbm{Q}}
\newcommand{\LR}{\mathbbm{R}}
\newcommand{\LZ}{\mathbbm{Z}}
\newdimen{\standardlabelwidth}
\newcommand{\standardlabel}[1]{#1\kern\standardlabelwidth}
\begin{document}


\title[A note on coincidence isometries of modules in
  Euclidean space]{A note on coincidence isometries\\ of modules in
  Euclidean space}
\author{Christian Huck}
\address{Department of Mathematics and Statistics\\
  The Open University\\ Walton Hall\\ Milton Keynes\\ MK7 6AA\\
   United Kingdom}
\email{c.huck@open.ac.uk}
\thanks{The author was supported by EPSRC via Grant EP/D058465/1.}

\begin{abstract}
It is shown that the coincidence isometries of certain modules in Euclidean
$n$-space can be decomposed into a product of at most $n$ coincidence reflections
  defined by their non-zero elements. This generalizes previous results obtained for lattices to situations that are relevant in quasicrystallography.
\end{abstract}

\maketitle

\section{Introduction}\label{sec1}

The classification of grain boundaries in (quasi)crystals
is closely related to the existence of coincidence submodules of their 
underlying $\LZ$-modules; cf.~\cite{baake}, ~\cite{bpr},
~\cite{grimmer}, ~\cite{PBR} and references therein. Here, two $\LZ$-modules (additive subgroups) $\varGamma,\varGamma'$ of Euclidean $n$-space are called
{\em commensurate}, denoted by $\varGamma\sim \varGamma'$, if their
intersection $\varGamma\cap \varGamma'$ has finite subgroup index both in
$\varGamma$ and $\varGamma'$. A (linear) {\em coincidence isometry} of
$\varGamma$ is an element $f$ of the real orthogonal group $\operatorname{O}(n)$ such that
$\varGamma\sim f(\varGamma)$. We denote by $\operatorname{OC}(\varGamma)$ the set of linear coincidence 
isometries of $\varGamma$. For $f\in\operatorname{OC}(\varGamma)$,  the intersection $\varGamma\cap
f(\varGamma)$ is called a {\em coincidence submodule} of
$\varGamma$. In fact, for finitely generated $\LZ$-modules $\varGamma$, the set $\operatorname{OC}(\varGamma)$ is a subgroup
of $\operatorname{O}(n)$ 
(Corollary~\ref{group}). For the main results, we
consider free $S$-modules
$\varGamma\subset\LR^n$ of rank $n$ that span $\LR^n$, where $S$ is a
subring of $\LR$ that is finitely generated as a $\LZ$-module, and extend
previous results obtained
for the crystallographic case $S=\LZ$, where $\varGamma$ is a lattice,
to the more general situation;
cf.~\cite{zou}. In particular,
we consider {\em $S$-modules over $K$ in $\LR^n$} 
(Definition~\ref{omodule}), where $K$ is the field of fractions of
$S$, and show that the coincidence
isometries of these modules $\varGamma$ can be decomposed into at
most $n$ coincidence reflections defined by non-zero elements of $\varGamma$
(Corollary~\ref{ci}). This class of modules includes many examples
that are
relevant in (quasi)crystallography (Example~\ref{ex}).

\section{Preliminaries and general results}\label{sec3}

Frequently, and without further mention, we use the consequence of
Lagrange's theorem saying that, for any subgroup $\varGamma'$ of an
Abelian group $\varGamma$ of finite subgroup index
$m:=[\varGamma:\varGamma']$, the group $m\varGamma$ is a subgroup of $\varGamma'$; compare~\cite[Ch.~1, Propositions 2.2
  and 4.1]{La}.

\begin{lemma}\label{comm0}
Let $\varGamma,\varGamma'\subset\LR^n$ be free $\LZ$-modules of finite
rank $r$. The following assertions are equivalent:
\begin{itemize}
\item[{\rm (i)}]
$\varGamma\sim \varGamma'$.
\item[{\rm (ii)}]
$\varGamma\cap\varGamma'$ contains a free $\LZ$-module of rank $r$.
\item[{\rm (iii)}]
$\varGamma\cap\varGamma'$ is a free $\LZ$-module of rank $r$.
\end{itemize}
\end{lemma}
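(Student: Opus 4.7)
The plan is to prove the three conditions equivalent in the cyclic order $\text{(i)}\Rightarrow\text{(ii)}\Rightarrow\text{(iii)}\Rightarrow\text{(i)}$, relying throughout on standard structure theory of finitely generated free Abelian groups.

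For the implication $\text{(i)}\Rightarrow\text{(ii)}$, I would set $m:=[\varGamma:\varGamma\cap\varGamma']$, which is finite by hypothesis, and invoke the Lagrange-type consequence recalled just before the lemma to conclude that $m\varGamma\subset\varGamma\cap\varGamma'$. Since $\varGamma$ is free of rank $r$ and multiplication by the non-zero integer $m$ is injective on $\LR^n$, the subgroup $m\varGamma$ is itself free of rank $r$, which gives~(ii).

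For $\text{(ii)}\Rightarrow\text{(iii)}$, I would appeal to the fact that any subgroup of a free $\LZ$-module of rank $r$ is itself free of rank at most $r$. Applied to $\varGamma\cap\varGamma'\subset\varGamma$, this yields one inequality; the hypothesis~(ii) that $\varGamma\cap\varGamma'$ contains a free $\LZ$-submodule of rank $r$ forces the rank to be exactly $r$.

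For $\text{(iii)}\Rightarrow\text{(i)}$, I would use the Smith normal form (or the elementary divisor theorem) applied to the inclusion $\varGamma\cap\varGamma'\subset\varGamma$: there exists a $\LZ$-basis $b_1,\dots,b_r$ of $\varGamma$ and positive integers $d_1,\dots,d_r$ such that $d_1b_1,\dots,d_rb_r$ is a $\LZ$-basis of $\varGamma\cap\varGamma'$, so $\varGamma/(\varGamma\cap\varGamma')\cong\bigoplus_{i=1}^{r}\LZ/d_i\LZ$ is finite. The same argument applied to $\varGamma\cap\varGamma'\subset\varGamma'$ gives the other finite index, proving $\varGamma\sim\varGamma'$. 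I do not expect any serious obstacle; the only point that requires slight care is to notice that equal $\LZ$-rank of a submodule and an ambient free $\LZ$-module of rank $r$ in $\LR^n$ already forces the quotient to be finite, without any a priori assumption that the two modules span the same $\LR$-subspace, since Smith normal form is purely algebraic.
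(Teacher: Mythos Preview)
Your argument is correct and follows essentially the same route as the paper: the implication $\text{(i)}\Rightarrow\text{(ii)}$ is identical, and for the remaining two implications the paper simply cites the structure theorem for subgroups of free modules over a PID and a standard finite-index lemma (Borevich--Shafarevich), which you have unpacked explicitly via the elementary divisor theorem. No gap and no genuinely different idea.
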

\begin{proof}
If $\varGamma\sim \varGamma'$, then the subgroup index 
$m=[\varGamma:(\varGamma\cap \varGamma')]$ is finite. Hence
$\varGamma\cap \varGamma'$ contains the free $\LZ$-module $m\varGamma$
of rank $r$. This proves direction (i) $\Rightarrow$ (ii). Since
$\varGamma$ is a free $\LZ$-module of rank $r$ and
since $\LZ$ is a principal ideal
domain, direction (ii) $\Rightarrow$ (iii) follows from~\cite[Ch.~3, Theorem 7.1]{La}. Direction
(iii) $\Rightarrow$ (i) is an immediate consequence of~\cite[Ch.~2, Lemma 6.1.1]{bo}.
\end{proof}
 
\begin{lemma}\label{comm}
Commensurateness of free $\LZ$-modules $\varGamma \subset\LR^n$ of the
same 
finite rank is
an equivalence relation.
\end{lemma}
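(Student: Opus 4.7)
The plan is to establish reflexivity, symmetry, and transitivity of $\sim$ on the class of free $\LZ$-modules in $\LR^n$ of rank $r$. Reflexivity is immediate since $\varGamma\cap\varGamma=\varGamma$. Symmetry is built into the definition, because the condition ``$\varGamma\cap\varGamma'$ has finite subgroup index in both $\varGamma$ and $\varGamma'$'' is manifestly invariant under swapping $\varGamma$ and $\varGamma'$. Both points can be dispatched in one line; the real content of the lemma is transitivity, which I would handle using Lemma~\ref{comm0}.

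For transitivity, suppose $\varGamma_1 \sim \varGamma_2$ and $\varGamma_2 \sim \varGamma_3$. Set
\[
m_1 := [\varGamma_2 : \varGamma_1\cap\varGamma_2]<\infty, \qquad m_2 := [\varGamma_2 : \varGamma_2\cap\varGamma_3]<\infty.
\]
By the Lagrange consequence recalled at the start of this section, $m_1\varGamma_2\subset\varGamma_1\cap\varGamma_2$ and $m_2\varGamma_2\subset\varGamma_2\cap\varGamma_3$, whence
\[
m_1 m_2\,\varGamma_2 \;\subset\; \varGamma_1\cap\varGamma_2\cap\varGamma_3 \;\subset\; \varGamma_1\cap\varGamma_3.
\]
Since $m_1 m_2\,\varGamma_2$ is a free $\LZ$-module of rank $r$, the intersection $\varGamma_1\cap\varGamma_3$ contains a free $\LZ$-module of rank $r$, and the implication (ii)$\Rightarrow$(i) of Lemma~\ref{comm0} yields $\varGamma_1\sim\varGamma_3$.

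I do not expect any genuine obstacle: the proof is essentially bookkeeping once one has the tool in Lemma~\ref{comm0}. The only point worth stressing is the passage through the auxiliary module $\varGamma_2$ to produce a common rank-$r$ subgroup of both $\varGamma_1$ and $\varGamma_3$; multiplying the two indices and invoking the Lagrange consequence makes this automatic, so no finer argument is required.
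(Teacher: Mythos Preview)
Your proof is correct and follows essentially the same line as the paper's: reflexivity and symmetry are dismissed as immediate, and transitivity is handled by taking the two indices $m_1=[\varGamma_2:\varGamma_1\cap\varGamma_2]$ and $m_2=[\varGamma_2:\varGamma_2\cap\varGamma_3]$, using the Lagrange consequence to place $m_1m_2\varGamma_2$ inside $\varGamma_1\cap\varGamma_3$, and then invoking Lemma~\ref{comm0}. The only cosmetic difference is that you spell out the intermediate inclusion $m_1m_2\varGamma_2\subset\varGamma_1\cap\varGamma_2\cap\varGamma_3$, which the paper leaves implicit.
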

\begin{proof}
Reflexivity and symmetry are clear by definition. For the
transitivity, let $\varGamma_1\sim \varGamma_2$ and $\varGamma_2\sim \varGamma_3$. In particular, the
indices $m_{12}=[\varGamma_2:(\varGamma_1\cap \varGamma_2)]$ and $m_{23}=[\varGamma_2:(\varGamma_2\cap \varGamma_3)]$ are
finite. Since $m_{12}\varGamma_2\subset (\varGamma_1\cap \varGamma_2)$ and
$m_{23}\varGamma_2\subset (\varGamma_2\cap \varGamma_3)$, one sees
that $\varGamma_1\cap \varGamma_3$ contains the free $\LZ$-module  
$m_{12}m_{23}\varGamma_2$, which has the same finite rank as $\varGamma_1$ and $\varGamma_3$, whence $\varGamma_1\sim
\varGamma_3$ by Lemma~\ref{comm0}. 
\end{proof}

\begin{cor}\label{group}
For free $\LZ$-modules $\varGamma\subset\LR^n$ of finite rank,
$\operatorname{OC}(\varGamma)$ is a group. In particular, it is a
subgroup of $\operatorname{O}(n)$.
\end{cor}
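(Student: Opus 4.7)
The plan is to verify the three subgroup axioms: that $\operatorname{OC}(\varGamma)$ is non-empty and closed under both composition and inversion. Non-emptiness is immediate since the identity satisfies $\varGamma\sim\varGamma$. The underlying observation that powers the remaining two steps is that every $f\in\operatorname{O}(n)$ is, in particular, a group isomorphism of the additive group $(\LR^n,+)$, so for any subgroups $\varGamma_1,\varGamma_2\subset\LR^n$ with $\varGamma_1\subset\varGamma_2$ one has $[\varGamma_2:\varGamma_1]=[f(\varGamma_2):f(\varGamma_1)]$. Consequently, $\varGamma\sim\varGamma'$ if and only if $f(\varGamma)\sim f(\varGamma')$, for any isometry $f$. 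This preservation principle is the workhorse; nothing deeper is needed.

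For closure under inverses, suppose $f\in\operatorname{OC}(\varGamma)$, so $\varGamma\sim f(\varGamma)$. Applying $f^{-1}\in\operatorname{O}(n)$ to both modules preserves commensurateness by the above remark, giving $f^{-1}(\varGamma)\sim \varGamma$, hence $f^{-1}\in\operatorname{OC}(\varGamma)$ by symmetry of $\sim$ (Lemma~\ref{comm}). For closure under composition, let $f,g\in\operatorname{OC}(\varGamma)$. Then $\varGamma\sim g(\varGamma)$; applying $f$ yields $f(\varGamma)\sim f(g(\varGamma))=(f\circ g)(\varGamma)$. Combined with the hypothesis $\varGamma\sim f(\varGamma)$ and the transitivity asserted in Lemma~\ref{comm}, this gives $\varGamma\sim (f\circ g)(\varGamma)$, so $f\circ g\in\operatorname{OC}(\varGamma)$.

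There is no real obstacle; the only subtle point is to notice that free $\LZ$-modules of finite rank in $\LR^n$ are mapped by elements of $\operatorname{O}(n)$ to free $\LZ$-modules of the same finite rank, so that Lemma~\ref{comm} genuinely applies to the modules appearing in the argument (for instance, $f(\varGamma)$ and $(f\circ g)(\varGamma)$ share the rank of $\varGamma$). Once this is noted, the verification reduces to bookkeeping with the equivalence relation $\sim$.
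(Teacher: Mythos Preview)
Your proof is correct and follows essentially the same approach as the paper: both rely on the observation that isometries preserve commensurateness, combined with the equivalence relation of Lemma~\ref{comm}. The only cosmetic difference is that the paper verifies the subgroup criterion in the form $fg^{-1}\in\operatorname{OC}(\varGamma)$, whereas you check closure under composition and inversion separately; you are also more explicit than the paper about why applying an isometry preserves $\sim$.
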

\begin{proof}
Since commensurateness of free $\LZ$-modules 
of the same finite rank is reflexive (Lemma~\ref{comm}), one has $\operatorname{id}\in\operatorname{OC}(\varGamma)$. Now let
$f,g\in\operatorname{OC}(\varGamma)$, i.e., $\varGamma\sim f(\varGamma)$ and $\varGamma\sim
g(\varGamma)$. By the symmetry of commensurateness of free
$\LZ$-modules of the same finite rank (Lemma~\ref{comm}), the latter implies $\varGamma\sim g^{-1}(\varGamma)$ and, further, that
$f(\varGamma)\sim
fg^{-1}(\varGamma)$. Finally, the transitivity
property of commensurateness of free $\LZ$-modules of the same finite
rank gives $\varGamma\sim fg^{-1}(\varGamma)$ (Lemma~\ref{comm}).
\end{proof}

We use the convention that subrings of $\LR$ always contain the
number $1$. Recall that an algebraic number field $K$ is a field extension 
of $\LQ$ of finite degree $d:=[K:\LQ]$ over $\LQ$. An order $\mathcal{O}$ of $K$ is
a subring of $K$ that is free 
 of rank $d$ as a $\LZ$-module. Among the various orders of $K$, there
is one maximal order which contains all the other orders, namely the ring $\mathcal{O}_K$
 of algebraic integers in $K$; cf.~\cite{bo} for general background
 material on algebraic number fields. As a generalization of the ring
 $\LZ$ of 
 rational integers, we now work with subrings $S$ of $\LR$ that are finitely
generated as a $\LZ$-module and hence are free $\LZ$-modules of finite
rank $s$ by~\cite[Ch.~3, Theorem 7.3]{La}. These rings can be characterized as follows.

\begin{prop}
Let $S$ be a subring of\/ $\LR$ that is finitely
generated as a $\LZ$-module. Then, its field $K$ of fractions in $\LR$
is a real algebraic number field and $S$ is contained in $\mathcal{O}_K$, with
equality if and only if $S$ is a Dedekind domain. Vice
versa, every subring $S$ of the ring of algebraic integers in a real algebraic
number field $K$ is finitely
generated as a $\LZ$-module.
\end{prop}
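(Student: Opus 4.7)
The plan is to split the statement into four claims and dispatch each with a standard argument.

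For the first two assertions (that $K$ is a real algebraic number field and $S\subset\mathcal{O}_K$), I would start from the observation that for any $\alpha\in S$, the subring $\LZ[\alpha]$ sits inside $S$ and is therefore finitely generated as a $\LZ$-module, since submodules of finitely generated modules over the Noetherian ring $\LZ$ are again finitely generated. This forces $\alpha$ to be integral over $\LZ$ by the standard characterization. I would then identify the field of fractions $K\subset\LR$ with the $\LQ$-span $V:=\LQ\cdot S$ of $S$: any $\LZ$-basis of $S$ remains $\LQ$-linearly independent (clear denominators), so $\dim_\LQ V = s$; as $V$ is a finite-dimensional integral $\LQ$-algebra, it is automatically a field, and being the smallest subfield of $\LR$ containing $S$, it coincides with $K$. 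This realizes $K$ as a real algebraic number field of degree $s$, and since each element of $S$ is an algebraic integer lying in $K$, one obtains $S\subset\mathcal{O}_K$ directly.

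For the Dedekind characterization, the implication $S=\mathcal{O}_K\Rightarrow S$ is Dedekind is the classical theorem on rings of integers in algebraic number fields. For the converse, if $S$ is Dedekind, then in particular $S$ is integrally closed in its field of fractions $K$; for any $\alpha\in\mathcal{O}_K$, the element $\alpha$ is integral over $\LZ\subset S$, hence integral over $S$, and therefore lies in $S$. This yields $\mathcal{O}_K\subset S$, so $S=\mathcal{O}_K$. Finally, for the converse part of the proposition, any subring $S$ of $\mathcal{O}_K$ is a $\LZ$-submodule of $\mathcal{O}_K$; since $\mathcal{O}_K$ is a free $\LZ$-module of finite rank $d=[K:\LQ]$ and $\LZ$ is a principal ideal domain, $S$ itself is free of rank at most $d$ as a $\LZ$-module, hence finitely generated.

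The main technical nuance, though not really a deep obstacle, lies in the first step: one must verify carefully that the $\LQ$-span of $S$ genuinely coincides with the field of fractions of $S$, and that its $\LQ$-dimension agrees with the $\LZ$-rank $s$ of $S$. Once this identification is in place, the remaining claims follow immediately from standard properties of orders and integral extensions in algebraic number theory.
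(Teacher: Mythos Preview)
Your proof is correct and follows essentially the same route as the paper's: integrality of elements of $S$ via the finitely-generated-module criterion, identification of $K$ as a finite extension of $\LQ$, the Dedekind equivalence via integral closure, and the converse via the submodule theorem over the PID $\LZ$. The only cosmetic difference is that the paper writes $S=\LZ[\alpha_1,\dots,\alpha_m]$ and cites $K=\LQ(\alpha_1,\dots,\alpha_m)$ as a finitely generated algebraic extension, whereas you argue that $\LQ\cdot S$ is a finite-dimensional integral $\LQ$-algebra and hence a field; your variant has the minor bonus of exhibiting $[K:\LQ]$ as the $\LZ$-rank of $S$.
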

\begin{proof}
Let $S$ be a subring of\/ $\LR$ that is finitely
generated as a $\LZ$-module, say $S=\LZ\alpha_1+ \dots
+\LZ\alpha_m$. By the explanations given in \S1 of~\cite[Ch.~7]{La}, every element of $S$
is integral over $\LZ$, i.e., $S\subset\mathcal{O}_K$. Since
$S=\LZ[\alpha_1,\dots,\alpha_m]$, one has
$$K=\LQ(\alpha_1,\dots,\alpha_m)\,,$$ whence $K$ is indeed a real
algebraic number field; cf.~\cite[Ch.~5, Proposition 1.6]{La}. Suppose
that $S$ is also a Dedekind domain and let
$\alpha\in\mathcal{O}_K$. Trivially, this implies that
$\alpha$ is integral over $S$. But this means that $\alpha\in S$
because, in particular, $S$ is
integrally closed. Hence, for Dedekind domains $S$, one has $S=\mathcal{O}_K$. On the other
hand, rings $\mathcal{O}_K$ of algebraic integers in a real
algebraic number field $K$ are always Dedekind domains; cf.~\cite{N}. Now let $S$ be a subring of the
ring of algebraic integers $\mathcal{O}_K$ in a real algebraic
number field $K$. Since $\mathcal{O}_K$ is free of rank
$d:=[K:\LQ]$ as a $\LZ$-module, $S$ is finitely
generated as a $\LZ$-module; cf.~\cite[Ch.~3, Theorem 7.1]{La}.
\end{proof}

Further, we consider free $S$-modules $\varGamma\subset\LR^n$ of rank $n$
that span $\LR^n$. In other words, these modules $\varGamma$ are the $S$-span of an
$\LR$-basis of $\LR^n$. In particular, $\varGamma$ is a free $\LZ$-module of rank
$s n$. In the simplest case $S=\LZ$, $\varGamma$ is a
{\em lattice} in $\LR^n$. Further, $K$ will always denote the field of
 fractions of $S$ in $\LR$. There is the following characterization of commensurateness; compare~\cite[Theorem
  2.1]{zou} for the lattice case. Originally, the proof in the
case of lattices in Euclidean $3$-space is due
to Grimmer; cf.~\cite{grimmer}.

\begin{theorem}\label{charac}
Let
$\varGamma_1,\varGamma_2\subset\LR^n$ be free $S$-modules
of rank $n$ that span 
$\LR^n$ and let $B_1,B_2\in\operatorname{GL}(n,\LR)$
be basis matrices of the $S$-modules $\varGamma_1$ and $\varGamma_2$,
respectively. The following assertions are equivalent:
\begin{itemize}
\item[{\rm (i)}]
$\varGamma_1\sim \varGamma_2$.
\item[{\rm (ii)}]
The intersection $\varGamma_1\cap \varGamma_2$ contains a free
$S$-module of rank $n$ that spans $\LR^n$.
\item[{\rm (iii)}] 
$B_2^{-1}B_1\in\operatorname{GL}(n,K)$.
\end{itemize}
\end{theorem}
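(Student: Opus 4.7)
The plan is to establish the chain (i) $\Leftrightarrow$ (ii) $\Leftrightarrow$ (iii), using Lemma~\ref{comm0} for the first equivalence and basis matrix manipulations for the second.

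For (i) $\Leftrightarrow$ (ii), I would exploit the fact that since $S$ is free of rank $s$ as a $\LZ$-module, any free $S$-module of rank $n$ spanning $\LR^n$ is automatically a free $\LZ$-module of rank $sn$. So both $\varGamma_1,\varGamma_2$ are free $\LZ$-modules of rank $sn$. For (i) $\Rightarrow$ (ii), if $\varGamma_1\sim\varGamma_2$, then $m:=[\varGamma_1:\varGamma_1\cap\varGamma_2]$ is finite and Lagrange gives $m\varGamma_1\subset\varGamma_1\cap\varGamma_2$; the module $m\varGamma_1$ is the desired free $S$-module of rank $n$ spanning $\LR^n$, since scaling by the nonzero integer $m$ is an $S$-module isomorphism. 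For (ii) $\Rightarrow$ (i), the contained free $S$-module of rank $n$ is a free $\LZ$-submodule of rank $sn$, so Lemma~\ref{comm0} (ii) $\Rightarrow$ (i) applied with $r=sn$ yields $\varGamma_1\sim\varGamma_2$.

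For (ii) $\Rightarrow$ (iii), I would pick a basis matrix $B_3\in\operatorname{GL}(n,\LR)$ of a free $S$-submodule $\varGamma_3\subset\varGamma_1\cap\varGamma_2$ of rank $n$ spanning $\LR^n$. Because $\varGamma_3\subset\varGamma_i$, each column of $B_3$ is an $S$-linear combination of the columns of $B_i$, yielding matrices $C_1,C_2\in M(n,S)$ with $B_3=B_1C_1=B_2C_2$. Since $B_1,B_3$ are both in $\operatorname{GL}(n,\LR)$, so is $C_1$, and because its entries lie in $S\subset K$, the adjugate formula shows $C_1^{-1}\in\operatorname{GL}(n,K)$. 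Then $B_2^{-1}B_1=C_2C_1^{-1}\in\operatorname{GL}(n,K)$, as required.

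For (iii) $\Rightarrow$ (ii), I would set $A:=B_2^{-1}B_1\in\operatorname{GL}(n,K)$ and clear denominators: since $K$ is the field of fractions of $S$, there exists $d\in S\setminus\{0\}$ with $dA\in M(n,S)$. Then $dB_1=B_2(dA)$, so every column of $dB_1$ lies in $\varGamma_2$; trivially it also lies in $\varGamma_1$ as $d\in S$. Hence $d\varGamma_1\subset\varGamma_1\cap\varGamma_2$, and $d\varGamma_1$ is a free $S$-module of rank $n$ spanning $\LR^n$ because multiplication by the nonzero scalar $d$ preserves both.

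The main obstacle is essentially bookkeeping rather than mathematical depth: one has to be careful to track the distinction between the $S$-module structure (rank $n$) and the underlying $\LZ$-module structure (rank $sn$) so that Lemma~\ref{comm0} can be invoked, and to use the field-of-fractions property of $K$ precisely in the clearing-denominators step of (iii) $\Rightarrow$ (ii). No subtler point (such as principal-ideal-domain-like behaviour of $S$) is needed, which is why the theorem holds at this level of generality.
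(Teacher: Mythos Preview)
Your argument is correct and coincides with the paper's proof in all essential points: the same $m\varGamma_1$ inclusion for (i)~$\Rightarrow$~(ii), the same change-of-basis factorization $B_3=B_1C_1=B_2C_2$ with the adjugate formula for (ii)~$\Rightarrow$~(iii), and the same clearing-of-denominators step $d\varGamma_1\subset\varGamma_1\cap\varGamma_2$ followed by Lemma~\ref{comm0}. The only cosmetic difference is that the paper closes the cycle via (iii)~$\Rightarrow$~(i) directly, whereas you pass through (ii) and add a separate (ii)~$\Rightarrow$~(i) step; the content is identical.
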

\begin{proof}
If $\varGamma_1\sim \varGamma_2$, the subgroup index
$m=[\varGamma:(\varGamma\cap \varGamma')]$ is finite. Hence, 
$\varGamma\cap \varGamma'$ contains the free $S$-module $m\varGamma$
of rank $n$ that spans
$\LR^n$. This proves direction (i) $\Rightarrow$ (ii). For direction
(ii) $\Rightarrow$ (iii), let $B\in\operatorname{GL}(n,\LR)$ be the basis matrix of an $\LR$-basis
contained in $\varGamma\cap \varGamma'$. This implies the existence of non-singular matrices
$Z_1,Z_2\in\operatorname{Mat}(n,S)$ such that
$$
B_1Z_1=B=B_2Z_2\,,
$$
whence $B_2^{-1}B_1=Z_2Z_1^{-1}\in\operatorname{GL}(n,K)$ by the
standard formula for the inverse of a matrix. Finally, for direction
(iii) $\Rightarrow$ (i), assume that 
$B_2^{-1}B_1\in\operatorname{GL}(n,K)$. Then, there is a non-zero number
$\alpha\in S$ such that $B:=\alpha
B_2^{-1}B_1\in\operatorname{Mat}(n,S)$. The identity $\alpha
B_1=B_2B$ now implies $\alpha\varGamma_1\subset
(\varGamma_1\cap\varGamma_2)$. Since $\alpha\varGamma_1$ is a free
$\LZ$-module of rank $s n$, Lemma~\ref{comm0} implies $\varGamma_1\sim \varGamma_2$.
\end{proof}

\begin{cor}\label{characcor}
Let
$\varGamma\subset\LR^n$ be a free $S$-module of rank $n$ that spans
$\LR^n$. Then, for all basis matrices
$B_{\varGamma}\in\operatorname{GL}(n,\LR)$ of the
$S$-module $\varGamma$, one has
$$
\operatorname{OC}(\varGamma)\simeq\operatorname{O}(n,\LR)\cap(B_{\varGamma}\operatorname{GL}(n,K)B_{\varGamma}^{-1})
$$   
via the map that assigns to an element of $\operatorname{O}(n)$ its
representing matrix with respect to the canonical basis of $\LR^n$. 
\end{cor}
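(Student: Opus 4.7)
The plan is to identify $\operatorname{O}(n)$ with $\operatorname{O}(n,\LR)$ via the canonical-basis matrix assignment $f\mapsto M_f$, which is a group isomorphism, and then to translate the condition $\varGamma\sim f(\varGamma)$ into a matrix condition by means of Theorem~\ref{charac}.

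\medskip

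More concretely, I would first observe that if $B_{\varGamma}\in\operatorname{GL}(n,\LR)$ has as its columns an $S$-basis of $\varGamma$, then $f(\varGamma)$ is again a free $S$-module of rank $n$ spanning $\LR^n$, and a basis matrix of $f(\varGamma)$ is precisely $M_f B_{\varGamma}$ (the columns being the images under $f$ of the columns of $B_{\varGamma}$). Applying Theorem~\ref{charac} to the pair $\varGamma_1=\varGamma$, $\varGamma_2=f(\varGamma)$ with basis matrices $B_1=B_{\varGamma}$ and $B_2=M_f B_{\varGamma}$, the equivalence (i)$\Leftrightarrow$(iii) yields
$$
f\in\operatorname{OC}(\varGamma)\ \Longleftrightarrow\ B_2^{-1}B_1=B_{\varGamma}^{-1}M_f^{-1}B_{\varGamma}\in\operatorname{GL}(n,K)\,.
$$
Since $\operatorname{GL}(n,K)$ is a group, this is equivalent to $B_{\varGamma}^{-1}M_f B_{\varGamma}\in\operatorname{GL}(n,K)$, i.e., to $M_f\in B_{\varGamma}\operatorname{GL}(n,K)B_{\varGamma}^{-1}$. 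Combined with the trivial condition $M_f\in\operatorname{O}(n,\LR)$, the map $f\mapsto M_f$ identifies $\operatorname{OC}(\varGamma)$ with $\operatorname{O}(n,\LR)\cap(B_{\varGamma}\operatorname{GL}(n,K)B_{\varGamma}^{-1})$, as claimed. That the map is a group homomorphism is built into the standard identification $\operatorname{O}(n)\simeq\operatorname{O}(n,\LR)$, and both injectivity and surjectivity onto the stated intersection follow from the displayed equivalence.

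\medskip

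There is no serious obstacle here: the only point requiring care is the bookkeeping between the two equivalent formulations $B_{\varGamma}^{-1}M_f^{-1}B_{\varGamma}\in\operatorname{GL}(n,K)$ and $M_f\in B_{\varGamma}\operatorname{GL}(n,K)B_{\varGamma}^{-1}$, which is immediate once one uses that $\operatorname{GL}(n,K)$ is closed under inversion. The substance of the corollary thus lies entirely in Theorem~\ref{charac}, which supplies the field-theoretic characterization of commensurateness; everything else is a direct translation via the choice of basis $B_{\varGamma}$.
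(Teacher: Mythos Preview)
Your argument is correct and follows the same approach as the paper: observe that $M_fB_{\varGamma}$ is a basis matrix of $f(\varGamma)$ and then invoke Theorem~\ref{charac}. The paper's proof is simply a terser version of what you wrote, leaving the inversion bookkeeping implicit.
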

\begin{proof}
With 
$R\in\operatorname{O}(n,\LR)$, $RB_{\varGamma}$ is a basis matrix
of the $S$-module $R\varGamma$. The assertion now follows from Theorem~\ref{charac}. 
\end{proof}

\begin{example}
For a free
$S$-module $\varGamma$ of rank $n$ that spans
$\LR^n$ and is contained in $K^n$ (and hence has a basis matrix
$B_{\varGamma}\in\operatorname{GL}(n,K)$), Corollary~\ref{characcor}
shows that
$$\operatorname{OC}(\varGamma)\simeq\operatorname{O}(n,K):=\operatorname{O}(n,\LR)\cap\operatorname{GL}(n,K)\,.$$
In particular, this applies
to the $S$-module $S^n$.  
\end{example}

By definition, a
 {\em similarity isometry} of $\varGamma$ is an element
  $f\in\operatorname{O}(n)$ such that $\varGamma\sim\alpha
  f(\varGamma)$ for a suitable positive real number $\alpha$;
 compare~\cite{bghz}. For a relation between the group $\operatorname{OC}(\varGamma)$ and
  its supergroup $\operatorname{OS}(\varGamma)$ of similarity
 isometries of $\varGamma$, we refer the reader to~\cite{glied}. Our
 aim is now to gain some insight into the structure
 of the group
$\operatorname{OC}(\varGamma)$ for free $S$-modules $\varGamma$ of rank $n$ that span
$\LR^n$.

\section{Decomposition of coincidence isometries into reflections}\label{sec4}

The proofs of this section are parallel to the
corresponding ones of~\cite{zou}. For convenience, we prefer to
present the details. We start with the following relative of the well known
Cartan-Dieudonn\'e theorem on decomposing elements of
$\operatorname{O}(n)$ into a product of at most $n$ reflections; compare~\cite{omeara}. 

\begin{theorem}\label{refl}
Let $\varGamma\subset\LR^n$
be a free $S$-module of rank $n$ that spans
$\LR^n$. If any reflection
defined by a non-zero element of $\varGamma$ is a coincidence isometry
of $\varGamma$, then, any coincidence isometry of $\varGamma$ can be
decomposed as a product of at most $n$ reflections defined by non-zero elements of $\varGamma$.
\end{theorem}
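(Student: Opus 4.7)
I would adapt the classical Cartan-Dieudonn\'e proof, inducting on the codimension $n - \dim F_f$ of the fixed subspace $F_f := \{x \in \LR^n : f(x) = x\}$ of $f \in \operatorname{OC}(\varGamma)$. The base case $\dim F_f = n$ is immediate, since then $f = \operatorname{id}$ is realised as the empty product of reflections.

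For the inductive step, the key point is to produce a non-zero $w \in \varGamma$ such that $r_w \circ f$ has strictly larger fixed subspace than $f$. Granted such a $w$, the hypothesis yields $r_w \in \operatorname{OC}(\varGamma)$ and Corollary~\ref{group} gives $r_w \circ f \in \operatorname{OC}(\varGamma)$ as well; the induction hypothesis then decomposes $r_w \circ f$ into at most $n - \dim F_f - 1$ coincidence reflections defined by non-zero elements of $\varGamma$, whence $f = r_w \circ (r_w \circ f)$ is a product of at most $n - \dim F_f \le n$ such reflections. Following the classical argument, the natural candidate is $w := f(v) - v$ for some $v$ with $f(v) \neq v$: a short verification using $\|f(v)\| = \|v\|$ yields $\langle v, w\rangle = -\tfrac{1}{2}\|w\|^2$ and hence $r_w(f(v)) = v$, while for any $u \in F_f$ the isometry property of $f$ gives $\langle u, w\rangle = \langle f(u), f(v)\rangle - \langle u, v\rangle = 0$, so $r_w$ fixes all of $F_f$. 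Consequently $F_{r_w \circ f}$ contains the subspace $F_f + \LR v$, which has dimension $\dim F_f + 1$ because $v \notin F_f$.

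The only real obstacle, and the point where the module structure enters, is ensuring that the candidate $w = f(v) - v$ actually lies in $\varGamma$. For this, I would restrict the search for $v$ to the subgroup $\varGamma \cap f^{-1}(\varGamma)$, so that $f(v) \in \varGamma$ automatically and hence $w \in \varGamma$. Since $f^{-1} \in \operatorname{OC}(\varGamma)$ by Corollary~\ref{group}, we have $\varGamma \sim f^{-1}(\varGamma)$, so this intersection has finite $\LZ$-index in $\varGamma$ and in particular contains $m\varGamma$ for some positive integer $m$; hence it spans $\LR^n$ over $\LR$. If $f$ fixed every element of $\varGamma \cap f^{-1}(\varGamma)$, it would fix a spanning set and so equal $\operatorname{id}$, contrary to the reduction $f \neq \operatorname{id}$. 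Thus some $v$ in the intersection has $f(v) \neq v$, giving $w \in \varGamma \setminus \{0\}$ as required. Beyond this commensurateness step, everything is the routine Cartan-Dieudonn\'e bookkeeping already encoded in the induction.
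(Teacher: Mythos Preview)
Your argument is correct, and it takes a genuinely different route from the paper. The paper inducts on the ambient dimension $n$: once $f$ fixes a basis vector $\gamma_1$, it projects orthogonally onto the hyperplane $H=\gamma_1^{\perp}$ and applies the induction hypothesis to the projected module $\pi(\varGamma)\subset H$. This forces the paper to verify three auxiliary facts---that $\pi(\varGamma)$ is again a free $S$-module of rank $n-1$ spanning $H$, that $f|_H^H\in\operatorname{OC}(\pi(\varGamma))$, and that every reflection defined by a non-zero element of $\pi(\varGamma)$ is a coincidence isometry of $\pi(\varGamma)$---and the latter two require the commensurateness relation $2m\pi(\varGamma)\subset\varGamma$ together with the commutation identities $f|_H^H\pi=\pi f$ and $\rho_\lambda|_H^H\pi=\pi\rho_\lambda$. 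By contrast, you induct on the codimension of the fixed subspace $F_f$ and never leave the original module $\varGamma$; the only module-theoretic input you need is that $\varGamma\cap f^{-1}(\varGamma)$ spans $\LR^n$, which is immediate from commensurateness. Your approach is therefore noticeably shorter, and it yields the sharper bound of $n-\dim F_f$ reflections rather than just $n$. The paper's projection argument, on the other hand, more closely mirrors the structure of the classical Cartan--Dieudonn\'e proof and makes the passage to lower rank explicit, which could be useful if one later wants statements about submodules of $\varGamma$ sitting inside proper subspaces.
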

\begin{proof}
Let $\{\gamma_1,\dots,\gamma_n\}$ be an $S$-basis of
$\varGamma$ and let $f\in\operatorname{OC}(\varGamma)$. We
argue by induction on $n$. Clearly, the assertion holds for
$n=1$. Assume that it holds for $n\geq 1$ and consider the case
$n+1$. 

Assume first that $f(\gamma_1)=\gamma_1$ and let $H$ be the hyperplane
orthogonal to $\gamma_1$, i.e., 
$$H:=\{x\in\LR^{n+1}\,|\,\langle x,\gamma_1\rangle=0\}\,.$$
Then, $H$ is an $n$-dimensional subspace of $\LR^{n+1}$ and, due to $f\in\operatorname{O}(n+1)$, $H$ is
$f$-invariant. More precisely, one has $f(H)=H$. It
follows that, via restriction, $f$ induces an isometry of the $n$-dimensional
Euclidean space $H$. Compare the orthogonal projection
$\pi\!:\,\LR^{n+1}\rightarrow H$ along $\gamma_1$, given by 
$$
\pi(x)=x-\frac{\langle x,\gamma_1\rangle}{\langle\gamma_1,\gamma_1\rangle}\gamma_1\,,
$$
with the reflection $\rho_{\gamma_1}$ of $\LR^{n+1}$ along $\gamma_1$,
given by
$$
\rho_{\gamma_1}(x)=x-2\frac{\langle x,\gamma_1\rangle}{\langle\gamma_1,\gamma_1\rangle}\gamma_1\,.
$$
By assumption, one has
$m:=[\rho_{\gamma_1}(\varGamma):(\varGamma\cap\rho_{\gamma_1}(\varGamma))]<\infty$,
wherefore $m\rho_{\gamma_1}(\gamma_i)\in\varGamma$ for all $1\leq i\leq
n+1$. Hence, 
$
2m\pi(\gamma_i)=m\gamma_i+m\rho_{\gamma_1}(\gamma_i)\in\varGamma
$
for all $1\leq i\leq
n+1$. In other words, \begin{equation}\label{eq}2m\pi(\varGamma)\subset\varGamma\,.\end{equation} Clearly,
$\pi(\varGamma)$ is a free $S$-module of rank $n$ with $S$-basis
$\{\pi(\gamma_2),\dots,\pi(\gamma_{n+1})\}$ and, further, $\pi(\varGamma)$ spans
the $n$-dimensional Euclidean space 
$H$. Secondly, we claim that the (co-)restriction $f|_H^H$ is a
coincidence isometry of $\pi(\varGamma)$. In order to see this, note
that, since
$f\in\operatorname{OC}(\varGamma)$, the subgroup index
$p:=[\varGamma:(\varGamma\cap f(\varGamma))]$ is finite, whence
$p\varGamma\subset (\varGamma\cap f(\varGamma))$. Further, since
$f|_H^H\pi=\pi f$, one obtains that
\begin{eqnarray*}
p\pi(\varGamma)\,\,\subset\,\,\pi(\varGamma\cap f(\varGamma))&\subset& \pi(\varGamma)\cap
f(\pi(\varGamma))\\&=&\pi(\varGamma)\cap f|_H^H(\pi(\varGamma))\,,
\end{eqnarray*}
thereby proving the claim by virtue of Theorem~\ref{charac}. Finally, any reflection of $\pi(\varGamma)$
defined by a non-zero element of
$\pi(\varGamma)$ is a coincidence isometry of $\pi(\varGamma)$. To see
this, consider the reflection $\rho_{\pi(\gamma)}|_H^H$ of $H$ along a
non-zero element $\pi(\gamma)$ of $\pi(\varGamma)$. By
Relation~\eqref{eq}, one has
$\rho_{\pi(\gamma)}=\rho_{2m\pi(\gamma)}=\rho_{\lambda}$, where
$\lambda:=2m\pi(\gamma)$ is a non-zero element of $\varGamma$. Since
$\rho_{\lambda}\in\operatorname{OC}(\varGamma)$ by assumption, the subgroup index
$q:=[\varGamma:(\varGamma\cap \rho_{\lambda}(\varGamma))]$ is finite, whence
$q\varGamma\subset (\varGamma\cap
\rho_{\lambda}(\varGamma))$. Further, since $\rho_{\lambda}|_H^H\pi=\pi \rho_{\lambda}$, one
obtains that
\begin{eqnarray*}
q\pi(\varGamma)\,\,\subset\,\,\pi(\varGamma\cap \rho_{\lambda}(\varGamma))&\subset& \pi(\varGamma)\cap
\rho_{\lambda}(\pi(\varGamma))\\&=&\pi(\varGamma)\cap \rho_{\lambda}|_H^H(\pi(\varGamma))\,,
\end{eqnarray*}
thereby proving the claim by virtue of Theorem~\ref{charac}. By
the induction hypothesis, $f|_H^H$ is a product of at most $n$
reflections defined by non-zero elements
of $\pi(\varGamma)$, say
$$f|_H^H=\rho_{\pi(\lambda_1)}|_H^H\dots\rho_{\pi(\lambda_j)}|_H^H\,.$$
By
Relation~\eqref{eq} together with $\rho_{\pi(\lambda_i)}=\rho_{2m\pi(\lambda_i)}$ for all $1\leq
i\leq j$, one obtains that
$$
f=\rho_{2m\pi(\lambda_1)}\dots\rho_{2m\pi(\lambda_j)}\,.
$$  
This completes the proof in this case.

Secondly, assume that $f(\gamma_1)\neq\gamma_1$, whence
$z:=f(\gamma_1)-\gamma_1\neq 0$. Since
$f\in\operatorname{OC}(\varGamma)$, the subgroup index
$m:=[f(\varGamma):(\varGamma\cap f(\varGamma))]$ is finite, wherefore $mz\in\varGamma$. Since $\rho_z=\rho_{mz}$, one sees that
$\rho(z)$
is a reflection defined by a non-zero element of $\varGamma$. It
follows that $\rho_z f\in\operatorname{OC}(\varGamma)$. One can
easily verify that $\rho_z f(\gamma_1)=\gamma_1$. Hence, by the
first case, $\rho_z f$ is the product of at most $n$ reflections
defined by non-zero vectors of $\varGamma$, say
$\rho_z f=\rho_{\lambda_1}\dots\rho_{\lambda_j}$. Since
$\rho_z^2=\operatorname{id}$, one has
$$f=\rho_z\rho_{\lambda_1}\dots\rho_{\lambda_j}=\rho_{mz}\rho_{\lambda_1}\dots\rho_{\lambda_j}$$
is a product of at most $n+1$ reflections defined by non-zero elements
of $\varGamma$. This completes
the proof in the second case.
\end{proof}

\begin{theorem}\label{bn}
Let $\varGamma\subset\LR^n$
be a free $S$-module of rank $n$ that spans
$\LR^n$ and let $\{\gamma_1,\dots,\gamma_n\}$ be an $S$-basis of
$\varGamma$. Then,
any reflection defined by a non-zero element of $\varGamma$ is a coincidence isometry
of $\varGamma$ if and only if, for all $1\leq i,j,k\leq n$, one has
$$
\frac{\langle \gamma_i,\gamma_j\rangle}{\langle \gamma_k,\gamma_k\rangle}\in K\,.
$$
\end{theorem}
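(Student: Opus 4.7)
The plan is to translate the condition via Corollary~\ref{characcor}: an element of $\operatorname{O}(n)$ lies in $\operatorname{OC}(\varGamma)$ precisely when its matrix in the $S$-basis $\{\gamma_1,\dots,\gamma_n\}$ has entries in $K$. First I will write an arbitrary non-zero $\gamma=\sum_i s_i\gamma_i\in\varGamma$ (with $s_i\in S$) and expand
\[
\rho_\gamma(\gamma_j)=\gamma_j-2\frac{\langle\gamma_j,\gamma\rangle}{\langle\gamma,\gamma\rangle}\gamma
\]
in that basis. Since the $s_i$ all lie in $K$ and at least one is non-zero, the resulting matrix has entries in $K$ if and only if $\frac{\langle\gamma_j,\gamma\rangle}{\langle\gamma,\gamma\rangle}\in K$ for every $j$, and it is this reformulation I will work with.

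The \emph{if} direction then becomes straightforward: I expand the numerator and denominator as $\sum_k s_k\langle\gamma_j,\gamma_k\rangle$ and $\sum_{k,\ell}s_k s_\ell\langle\gamma_k,\gamma_\ell\rangle$, divide both by a fixed $\langle\gamma_{k_0},\gamma_{k_0}\rangle$, and invoke the hypothesis to see that both sums land in $K$; the quotient is then in $K$ since $\langle\gamma,\gamma\rangle\neq 0$.

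For the \emph{only if} direction I begin by applying the criterion to $\gamma=\gamma_k$ itself. Reading off the coefficient of $\gamma_k$ in $\rho_{\gamma_k}(\gamma_i)$ yields $\frac{\langle\gamma_i,\gamma_k\rangle}{\langle\gamma_k,\gamma_k\rangle}\in K$ for every pair $(i,k)$. The desired general statement will then follow from the factorisation
\[
\frac{\langle\gamma_i,\gamma_j\rangle}{\langle\gamma_k,\gamma_k\rangle}=\frac{\langle\gamma_i,\gamma_j\rangle}{\langle\gamma_j,\gamma_j\rangle}\cdot\frac{\langle\gamma_j,\gamma_j\rangle}{\langle\gamma_k,\gamma_k\rangle}
\]
as soon as I establish $\frac{\langle\gamma_j,\gamma_j\rangle}{\langle\gamma_k,\gamma_k\rangle}\in K$. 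In the generic case $\langle\gamma_j,\gamma_k\rangle\neq 0$ this is immediate, since the missing ratio equals the quotient of the two ratios already derived.

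I expect the main obstacle to be the degenerate case $\langle\gamma_j,\gamma_k\rangle=0$, where the previous shortcut breaks down. My plan here is to apply the hypothesis instead to the non-zero element $\gamma=\gamma_j+\gamma_k\in\varGamma$: under the orthogonality the coefficient of $\gamma_k$ in $\rho_{\gamma_j+\gamma_k}(\gamma_j)$ collapses to $-2\frac{\langle\gamma_j,\gamma_j\rangle}{\langle\gamma_j,\gamma_j\rangle+\langle\gamma_k,\gamma_k\rangle}$, and rearranging this element of $K$ forces $\frac{\langle\gamma_k,\gamma_k\rangle}{\langle\gamma_j,\gamma_j\rangle}\in K$, which closes the argument.
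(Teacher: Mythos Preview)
Your proposal is correct and follows essentially the same route as the paper's proof: both directions hinge on the criterion that $\rho_\gamma\in\operatorname{OC}(\varGamma)$ iff each $\frac{\langle\gamma_j,\gamma\rangle}{\langle\gamma,\gamma\rangle}\in K$, the forward direction is handled by the same factorisation through $\frac{\langle\gamma_j,\gamma_j\rangle}{\langle\gamma_k,\gamma_k\rangle}$ with the same case split on $\langle\gamma_j,\gamma_k\rangle$, and the orthogonal case is resolved by reflecting in a sum (you use $\gamma_j+\gamma_k$, the paper uses $\gamma_i-\gamma_k$). The only cosmetic difference is that you invoke Corollary~\ref{characcor} to phrase the coincidence condition as ``matrix entries in $K$'', whereas the paper argues directly via Theorem~\ref{charac}; the content is the same.
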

\begin{proof}
Assume first that any reflection defined by a non-zero element of $\varGamma$ is a coincidence isometry
of $\varGamma$. In particular, the reflections $\rho_{\gamma_i}$,
where 
$1\leq i\leq n$, are coincidence isometries of $\varGamma$, whence
$m_i:=[\rho_{\gamma_i}(\varGamma):(\varGamma\cap\rho_{\gamma_i}(\varGamma))]<\infty$
and, further, $m_i\rho_{\gamma_i}(\varGamma)\subset\varGamma$ for all $1\leq i\leq n$. Since, for
all $1\leq i,j\leq n$, one has
$$
\rho_{\gamma_i}(\gamma_j)=\gamma_j-2\frac{\langle\gamma_j,\gamma_i\rangle}{\langle\gamma_i,\gamma_i\rangle}\gamma_i\,,
$$
the $\LR$-linear independence of the $\gamma_i$ shows that, for all $1\leq i,j\leq n$, one has
\begin{equation}\label{inK}
\frac{\langle \gamma_i,\gamma_j\rangle}{\langle \gamma_i,\gamma_i\rangle}\in K\,.
\end{equation}
If $\langle \gamma_i,\gamma_k\rangle\neq 0$, the last
observation shows that
$$
\frac{\langle \gamma_i,\gamma_i\rangle}{\langle \gamma_k,\gamma_k\rangle}=\frac{\langle \gamma_i,\gamma_i\rangle}{\langle \gamma_i,\gamma_k\rangle}\frac{\langle \gamma_i,\gamma_k\rangle}{\langle \gamma_k,\gamma_k\rangle}\in K\,.
$$
If $\langle \gamma_i,\gamma_k\rangle= 0$, consider the reflection
$\rho_{\lambda}$, where
$\lambda:=\gamma_i-\gamma_k\in\varGamma\setminus\{0\}$. By assumption,
$\rho_{\lambda}\in\operatorname{OC}(\varGamma)$ and similar to the
argumentation above, this gives
$$
\frac{\langle \gamma_i,\lambda\rangle}{\langle \lambda,\lambda\rangle}=\frac{1}{1+\frac{\langle \gamma_k,\gamma_k\rangle}{\langle \gamma_i,\gamma_i\rangle}}\in K\,,
$$
wherefore
$$
\frac{\langle \gamma_i,\gamma_i\rangle}{\langle \gamma_k,\gamma_k\rangle}\in K\,.
$$
Employing Equation~\eqref{inK}, this shows that, for all for all $1\leq i,j,k\leq n$, one has
$$
\frac{\langle \gamma_i,\gamma_j\rangle}{\langle \gamma_k,\gamma_k\rangle}=\frac{\langle \gamma_i,\gamma_j\rangle}{\langle \gamma_i,\gamma_i\rangle}\frac{\langle \gamma_i,\gamma_i\rangle}{\langle \gamma_k,\gamma_k\rangle}\in K\,.
$$
For the other direction, let $\gamma\in\varGamma\setminus\{0\}$, say
$\gamma=\sum_{j=1}^{n}\alpha_j\gamma_j$ for uniquely determined
$\alpha_j\in S$, $1\leq j\leq n$. Then, by assumption, for any $1\leq
i\leq n$, 
$$
\frac{\langle \gamma_i,\gamma\rangle}{\langle \gamma,\gamma\rangle}=\frac{\sum_{j=1}^{n}\alpha_j
  \langle \gamma_i,\gamma_j\rangle}{\sum_{k,l=1}^{n}\alpha_k\alpha_l \langle \gamma_k,\gamma_l\rangle}=\frac{\sum_{j=1}^{n}\alpha_j
  \frac{\langle \gamma_i,\gamma_j\rangle}{\langle \gamma_i,\gamma_i\rangle}}{\sum_{k,l=1}^{n}\alpha_k\alpha_l \frac{\langle \gamma_k,\gamma_l\rangle}{\langle \gamma_i,\gamma_i\rangle}}
$$
is an element of $K$. The reflection $\rho_{\gamma}$ of
$\LR^{n}$ along $\gamma$ satisfies 
$$
\rho_{\gamma}(x)=x-2\frac{\langle x,\gamma\rangle}{\langle \gamma,\gamma\rangle}\gamma\,.
$$
Since every element of $K$ is of the form $\alpha/\beta$, where
$\alpha\in S$ and $\beta\in S\setminus\{0\}$, one
sees that, for any $1\leq i\leq n$, there is an element
$\alpha_i\in S\setminus\{0\}$ such that
$\alpha_i\rho_{\gamma}(\gamma_i)\in\varGamma$. Setting
$\alpha:=\prod_{i=1}^n\alpha_i$, one obtains
$\alpha\rho_{\gamma}(\varGamma)\subset
(\varGamma\cap\rho_{\gamma}(\varGamma))$, 
which implies that $\rho_{\gamma}\in\operatorname{OC}(\varGamma)$ by
virtue of Theorem~\ref{charac}. This
completes the proof.
\end{proof}

The following consequence of Theorems~\ref{refl} and~\ref{bn} is immediate.

\begin{cor}\label{ci0}
Let $\varGamma\subset\LR^n$
be a free $S$-module of rank $n$ that spans
$\LR^n$. If there is a basis matrix $B_{\varGamma}\in
\operatorname{GL}(n,\LR)$ of $\varGamma$ such that
$B_{\varGamma}^tB_{\varGamma}$ has only entries in $K$, then every
non-zero vector of $\varGamma$ defines a coincidence reflection of $\varGamma$ and
every 
coincidence isometry of $\varGamma$ can be decomposed into a product of at most
$n$ reflections defined by non-zero elements of $\varGamma$.\qed
\end{cor}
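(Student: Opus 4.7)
The plan is simply to observe that the hypothesis on $B_{\varGamma}^{t}B_{\varGamma}$ is a reformulation of the Gram-matrix condition appearing in Theorem~\ref{bn}. Write $B_{\varGamma}=(\gamma_1,\dots,\gamma_n)$ in columns, so that $\{\gamma_1,\dots,\gamma_n\}$ is an $S$-basis of $\varGamma$; then the $(i,j)$-entry of $B_{\varGamma}^{t}B_{\varGamma}$ is exactly $\langle\gamma_i,\gamma_j\rangle$.

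By hypothesis, all these inner products lie in $K$. Since the basis spans $\LR^{n}$, each $\gamma_k$ is non-zero, so $\langle\gamma_k,\gamma_k\rangle$ is a non-zero element of $K$. As $K$ is a field, the quotient
\[
\frac{\langle\gamma_i,\gamma_j\rangle}{\langle\gamma_k,\gamma_k\rangle}
\]
lies in $K$ for all $1\leq i,j,k\leq n$. Applying Theorem~\ref{bn}, every reflection defined by a non-zero element of $\varGamma$ is a coincidence isometry of $\varGamma$. Then Theorem~\ref{refl} yields that every element of $\operatorname{OC}(\varGamma)$ decomposes into at most $n$ reflections defined by non-zero elements of $\varGamma$.

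There is no real obstacle here; the only thing to verify is that the Gram-matrix reformulation indeed matches the hypothesis of Theorem~\ref{bn}, and this is immediate from $B_{\varGamma}^{t}B_{\varGamma}=(\langle\gamma_i,\gamma_j\rangle)_{i,j}$ together with the fact that $K$ is closed under quotients by non-zero elements. Hence the corollary really is an immediate consequence of the two preceding theorems.
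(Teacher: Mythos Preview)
Your argument is correct and is exactly the route the paper intends: it states the corollary as an immediate consequence of Theorems~\ref{refl} and~\ref{bn}, and your observation that the entries of $B_{\varGamma}^{t}B_{\varGamma}$ are the Gram values $\langle\gamma_i,\gamma_j\rangle$, hence the quotient condition of Theorem~\ref{bn} holds, is precisely the intended verification.
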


\begin{definition}\label{omodule}
We call a
subset $\varGamma$ of $\LR^n$ an \emph{$S$-module over $K$ in $\LR^n$} if $\varGamma$ is a
free $S$-module of rank $n$ that spans $\LR^n$ and satisfies $\langle \gamma,\gamma\rangle\in
K$ for all $\gamma\in\varGamma$.
\end{definition}

Due to the polarization identity, the relation $\langle \gamma,\gamma\rangle\in
K$ holds for all $\gamma\in\varGamma$ if and only if one has $\langle \gamma,\gamma' \rangle\in K$ for all $\gamma,\gamma'\in \varGamma$. As an immediate consequence of Corollary~\ref{ci0}, one obtains the following

\begin{cor}\label{ci}
Let $\varGamma\subset\LR^n$ be an
$S$-module over $K$ in $\LR^n$. Then, every
non-zero vector of $\varGamma$ defines a coincidence reflection of $\varGamma$ and
every 
coincidence isometry of $\varGamma$ can be decomposed into a product of at most
$n$ reflections defined by non-zero elements of $\varGamma$. \qed  
\end{cor}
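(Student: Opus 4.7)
The plan is to reduce Corollary~\ref{ci} directly to Corollary~\ref{ci0} by producing a basis matrix whose associated Gram matrix has entries in $K$. This is essentially a bookkeeping task combined with the polarization identity that the paper has already flagged just before the statement.

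First, I would pick any $S$-basis $\{\gamma_1,\dots,\gamma_n\}$ of $\varGamma$ and form the basis matrix $B_{\varGamma}\in\operatorname{GL}(n,\LR)$ whose columns are the coordinate vectors of the $\gamma_i$ with respect to the canonical basis of $\LR^n$. Then, by the standard identification of the Euclidean inner product with the matrix product $x^t y$ in canonical coordinates, the $(i,j)$-entry of $B_{\varGamma}^t B_{\varGamma}$ is precisely $\langle \gamma_i,\gamma_j\rangle$.

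Second, I would use the polarization identity mentioned in the remark preceding Corollary~\ref{ci}: the relation
\[
2\langle \gamma,\gamma'\rangle \;=\; \langle \gamma+\gamma',\gamma+\gamma'\rangle-\langle \gamma,\gamma\rangle-\langle \gamma',\gamma'\rangle
\]
together with the fact that $\varGamma$ is closed under addition and that $K\supset\LQ$ shows that $\langle\gamma,\gamma'\rangle\in K$ for all $\gamma,\gamma'\in\varGamma$ whenever $\langle\gamma,\gamma\rangle\in K$ for all $\gamma\in\varGamma$. Specializing to the basis, this yields $\langle\gamma_i,\gamma_j\rangle\in K$ for all $1\leq i,j\leq n$, i.e., $B_{\varGamma}^t B_{\varGamma}\in\operatorname{Mat}(n,K)$.

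Finally, Corollary~\ref{ci0} applies to $B_{\varGamma}$ and delivers both assertions of Corollary~\ref{ci}: every non-zero element of $\varGamma$ defines a coincidence reflection, and every element of $\operatorname{OC}(\varGamma)$ factors as a product of at most $n$ such reflections. There is no genuine obstacle here; the substantive work is already contained in Theorems~\ref{refl} and~\ref{bn} (and their synthesis in Corollary~\ref{ci0}), and the only point worth verifying carefully is that the column convention for $B_{\varGamma}$ is consistent with the one used in Corollary~\ref{characcor}, which is a purely notational check.
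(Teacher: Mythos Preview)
Your proposal is correct and matches the paper's own reasoning: the paper states Corollary~\ref{ci} as an immediate consequence of Corollary~\ref{ci0} together with the polarization identity remark preceding it, and your argument spells out precisely that deduction. The only additional content you supply is the explicit verification that the Gram matrix $B_{\varGamma}^tB_{\varGamma}$ has entries $\langle\gamma_i,\gamma_j\rangle$, which is routine.
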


\begin{example}\label{ex}
The $\LZ$-modules over $\LQ$ in $\LR^n$ are
precisely the \emph{rational lattices} in $\LR^n$; cf.~\cite{crs,cs} for
specific examples. Interesting examples of $S$-modules over $K$ in $\LR^4$ are given by the 
{\em icosian ring} and the {\em octahedral}
 ring. In three dimensions, there are the body and face centred  
{\em icosahedral modules} of quasicrystallography; cf.~\cite{baake,bpr} and
references therein for details. An important class of planar examples is formed by the rings of {\em cyclotomic integers} in complex
cyclotomic fields.; cf.~\cite{PBR,Wa} and also see~\cite{glied}. These
rings appear in the description of planar mathematical
quasicrystals with $n$-fold cyclic symmetry; see~\cite{St} for genuine 
quasicrystals with cyclic symmetries of orders $5,8,10$ and $12$, respectively.
\end{example}

\section{Outlook}
In addition to the structure of the group of coincidence isometries, one is naturally interested in the characterization of the occuring coincidence submodules
and subgroup indices, respectively. We hope
 to report on progress in this direction for the above setting in the near future.

\section*{Acknowledgements}
It is a pleasure to thank Michael Baake, Svenja Glied and Uwe Grimm for helpful discussions.

\end{document}